\newtheorem{te}{Theorem}[section]
\newtheorem{example}{Example}
\newtheorem{os}[te]{Remark}
\newtheorem{prop}[te]{Proposition}
\newtheorem{coro}[te]{Corollary}
\newtheorem{Obs}{Observation}
\numberwithin{equation}{section}
\title{Hermite, Higher order Hermite, Laguerre type polynomials and Burgers like equations.}
\author{Giuseppe Dattoli, Roberto Garra, Silvia Licciardi\\	
	ENEA - Frascati Research Center, Via Enrico Fermi 45, 00044, \\Frascati, Rome, Italy  \\
	Section of Mathematics, Università Telematica Internazionale, Italy\\
	University of Palermo,  Department of Engeneering, \\
	viale delle Scienze, Palermo, Italy, orcid  0000-0003-4564-8866
}
\date{\today}
\begin{document}
	
	\maketitle

	\abstract{The multivariable version of ordinary and generalized Hermite  polynomials are the natural solutions of the classical heat equation and of its higher order versions. We derive the associated Burgers equations and show that analogous non-linear partial differential equations can be derived for Laguerre polynomials and for the relevant generalizations.}\\
	
	\textbf{Keywords}\\
	Burgers-type differential equations 58J35; Hermite polynomials 33C45, 35K05; Laguerre polynomials 42C05 .

\section{Introduction and preliminaries}

The theory of special functions and polynomials has been the subject of intensive programs of research during the years. Most of these studies have been directed towards  the framing of the relevant theory within a unifying context. Which from time to time has been identified with the theory of differential equations \cite{sea}, the group theoretic point of view \cite{talm}, umbral calculus \cite{Roman} and symbolic formalism \cite{pino}, just to quote a few of them.
An important output of these studies has been the formalization of the theoretical foundations of the relevant properties in terms of differential equations, be they ordinary or partial. Multi-variable special functions are the natural solutions of well- known equations  of mathematical physics and the possibility of extending these equations to non-linear forms is the subject of the present study.\\

The two variable Hermite polynomials are defined by \cite{dattoli}
\begin{equation}\label{H}
	H_n(x,y) = n!\sum_{r=0}^{\lfloor\frac{n}{2}\rfloor}\frac{y^r x^{n-2r}}{(n-2r)!r!}.
\end{equation}
and satisfy the parabolic equation
\begin{equation}\label{diff}
	\frac{\partial Z}{\partial y} = \frac{\partial^2 Z}{\partial x^2} 
\end{equation}
under the condition $Z(x,0) = x^n$.
Albeit two variable Hermite are well known in the mathematical literature and trace back to Hermite himself \cite{herm}, Appèl and Kampè dè Fèrièt \cite{app}, they received a wider diffusion when they have been exploited as auxiliary solutions of the heat equation, under the name of heat polynomials, for more specific details see Ref. \cite{wid}.
A further element of interest is due to the fact that they can be associated with non- linear differential equations realizing the so called Burgers hierarchy \cite{cao,levi,kud}.
Here we show that it is possible to construct particular interesting solutions for a wide class of nonlinear partial differential equations by using Hermite and Laguerre polynomials and Hopf-Cole transformation (HCt). We introduce the problem through the following example.

\begin{example}
 We recall that, by applying\footnote{We have omitted the arguments in Eq. \eqref{ch} for conciseness.} the HCt \cite{kud}, we obtain
\begin{equation}\label{ch}
	u(x,y) = \displaystyle\frac{\partial Z}{\partial x}\bigg/Z= \frac{\partial}{\partial x}\ln Z\;,
\end{equation}
with $Z(x,y)$ given by \eqref{H} satisfying Eq. \eqref{diff}. We can easily show that the function $u(x,y)$ satisfies a non-linear equation
presenting simultaneously non-linear advective and diffusive terms (the so called Burgers equation). Keeping the partial derivative of with respect to the y-variable, we obtain
\begin{equation}
	\frac{\partial u}{\partial y} = \frac{\displaystyle \left(\frac{\partial^2 Z}{\partial y \partial x}\right)Z-\frac{\partial Z}{\partial y}\frac{\partial Z}{\partial x}}{Z^2}.
\end{equation}
The use of Eq.\eqref{diff} yields
\begin{equation}
	\frac{\partial u}{\partial y} = \frac{\displaystyle \left(\frac{\partial^3 Z}{\partial x^3}\right)Z-\frac{\partial^2 Z}{\partial x^2}\frac{\partial Z}{\partial x}}{Z^2}.
\end{equation}
Then, according to Eq. \eqref{ch}, we find that the function $u$ satisfies the equation
\begin{equation}\label{bu}
		\frac{\partial u}{\partial y} = 	\frac{\partial^2 u}{\partial x^2}+2u	\frac{\partial u}{\partial x}.
\end{equation}
This means that the Burgers equation \eqref{bu} admits a solution of the form (see also \cite{levi})
\begin{equation}
u(x,y) = \frac{nH_{n-1}(x,y)}{H_n(x,y)}.
\end{equation} 
\end{example}
This result is a particular case, to be framed within a more general context. 
For example, the third order heat equation 
\begin{equation}\label{hdif}
	\frac{\partial \Psi}{\partial t} = \frac{\partial^3 \Psi}{\partial x^3},
\end{equation}
is associated with a non-linear equation, generalizing the already mentioned Burgers. The use of the same transformation given in Eq. \eqref{ch} and extension of the procedure outlined afterwards allows  to prove that the function $u$ satisfies the non-linear form \cite{levi}
\begin{equation}\label{thb}
	\frac{\partial u}{\partial y} = \frac{\partial u^3}{\partial x} +3\left(\frac{\partial u}{\partial x} \right)^2+3u\frac{\partial^2 u}{\partial x^2} +\frac{\partial^3 u}{\partial x^3}
\end{equation}
which, in a more compact form, reads
\begin{equation}\label{n3}
\frac{\partial u}{\partial y} = \frac{\partial}{\partial x}\left( \frac{\partial}{\partial x}+u\right)^2 u .
\end{equation}

\begin{Obs}
The last equations are obtained by proceeding on the basis of the paradigm reported below:
\begin{itemize}
	\item [i)] introduce the auxiliary function  defined as in Eq.\eqref{ch}, namely
	\begin{equation}\label{ch2}
	u(x,y) = \displaystyle\frac{\partial \Psi}{\partial x}\bigg/\Psi ,
\end{equation}
	\item [ii)] keep the derivative of both sides of Eq. \eqref{ch2} with respect to  and obtain
\begin{equation}\label{10}
\frac{\partial u}{\partial y} = \frac{\displaystyle \left(\frac{\partial^2 \Psi}{\partial y \partial x}\right)\Psi-\frac{\partial \Psi}{\partial y}\frac{\partial \Psi}{\partial x}}{\Psi^2} = \frac{\displaystyle \left(\frac{\partial^4 \Psi}{\partial x^4}\right)\Psi-\frac{\partial^3 \Psi}{\partial x^3}\frac{\partial \Psi}{\partial x}}{\Psi^2} ,
\end{equation}
\item [iii)]use the identity 
\begin{equation}
\frac{\partial \Psi}{\partial x} = u \Psi	.
\end{equation}
In Eq. \eqref{10}, eventually ending up with
\begin{equation}\label{lab12}
\begin{split}
	\frac{\partial u}{\partial y} &= \frac{\displaystyle \frac{\partial^3 (u\Psi)}{\partial x^3}-u\frac{\partial^3 \Psi}{\partial x^3}}{\Psi}=\frac{\displaystyle \Psi \frac{\partial^3 u}{\partial x^3}+3\frac{\partial \Psi}{\partial x}\frac{\partial^2 u}{\partial x^2}+3\frac{\partial u}{\partial x}\frac{\partial^2 \Psi}{\partial x^2}}{\Psi} \\
& = \frac{\partial^3 u}{\partial x^3}+3u\frac{\partial^2 u}{\partial x^2}+\frac{3}{\Psi}\frac{\partial u}{\partial x}\frac{\partial (u\Psi)}{\partial x}
\end{split}
\end{equation}
Expanding the last term on the rhs of the last line in \eqref{lab12}, we eventually end up with Eq. \eqref{thb}.
\end{itemize}
\end{Obs}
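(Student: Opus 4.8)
The plan is to reproduce, at third order, the computation carried out for the first-order Burgers case in the Example, simply replacing the heat equation \eqref{diff} by its third-order analogue \eqref{hdif}. I would start from the Hopf--Cole field $u = \partial_x\Psi/\Psi = \partial_x\ln\Psi$ of \eqref{ch2} and differentiate it with respect to the evolution variable $y$. The quotient rule gives $\partial_y u = (\Psi_{xy}\Psi - \Psi_y\Psi_x)/\Psi^2$, and invoking \eqref{hdif} to substitute $\Psi_y = \partial_x^3\Psi$ (hence $\Psi_{xy} = \partial_x^4\Psi$) turns this into the right-hand side of \eqref{10}. This already reduces the claim to re-expressing a ratio of $\Psi$-derivatives purely in terms of $u$ and its $x$-derivatives, exactly as step iii) of the Observation anticipates.

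The core algebraic step is to eliminate $\Psi$. Using the defining identity $\Psi_x = u\Psi$ I would factor $\Psi$ out of the numerator of \eqref{10}, writing $\partial_x^4\Psi = \partial_x^3(u\Psi)$ and $\Psi_{xxx}\Psi_x = u\Psi\,\partial_x^3\Psi$, so that $\partial_y u = [\partial_x^3(u\Psi) - u\,\partial_x^3\Psi]/\Psi$, which is the first line of \eqref{lab12}. Expanding $\partial_x^3(u\Psi)$ by the Leibniz rule produces a term $u\,\partial_x^3\Psi$ that cancels the subtracted $u\,\partial_x^3\Psi$, leaving $\Psi\,u_{xxx} + 3\Psi_x u_{xx} + 3\Psi_{xx}u_x$. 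Dividing by $\Psi$ and using $\Psi_x/\Psi = u$ together with $\Psi_{xx}/\Psi = u_x + u^2$ (the latter obtained by differentiating $\Psi_x = u\Psi$ once more) yields $\partial_y u = u_{xxx} + 3u\,u_{xx} + 3u_x^2 + 3u^2u_x$, which is \eqref{thb} once one notes $\partial_x(u^3) = 3u^2u_x$.

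I would also record a more structural derivation that explains the compact form \eqref{n3} and generalizes at once to higher orders. Writing $u_y = \partial_x(\Psi_y/\Psi)$ and noting the conjugation rule $\Psi^{-1}\partial_x(\Psi f) = f_x + uf = (\partial_x + u)f$, whose iteration gives $\Psi^{-1}\partial_x^k(\Psi f) = (\partial_x + u)^k f$, one obtains with $f = 1$ and $k = 3$ that $\Psi_y/\Psi = \partial_x^3\Psi/\Psi = (\partial_x + u)^3\,1 = (\partial_x + u)^2 u$. Hence $\partial_y u = \partial_x(\partial_x + u)^2 u$, which is precisely \eqref{n3}; expanding the operator recovers \eqref{thb}. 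The same argument applied to the $N$-th order heat equation $\partial_y\Psi = \partial_x^N\Psi$ gives $\partial_y u = \partial_x(\partial_x + u)^{N-1} u$, with the first-order Burgers equation \eqref{bu} as the case $N=2$.

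The main obstacle is purely organizational: keeping track of the Leibniz coefficients and correctly converting each ratio $\partial_x^j\Psi/\Psi$ into a differential polynomial in $u$ (here only $\Psi_x/\Psi = u$ and $\Psi_{xx}/\Psi = u_x + u^2$ are needed). The operator identity of the last paragraph removes this bookkeeping entirely, and it is the route I would rely on to make both the compact form \eqref{n3} and its extension to arbitrary order transparent.
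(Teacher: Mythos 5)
Your derivation is correct and, in its first two paragraphs, follows exactly the paper's paradigm: differentiate $u=\partial_x\Psi/\Psi$ in $y$, substitute $\Psi_y=\partial_x^3\Psi$, rewrite the numerator as $\partial_x^3(u\Psi)-u\,\partial_x^3\Psi$, and expand by Leibniz using $\Psi_x/\Psi=u$ and $\Psi_{xx}/\Psi=u_x+u^2$; this reproduces \eqref{lab12} and \eqref{thb} verbatim (your use of $\Psi_{xx}/\Psi=u_x+u^2$ is just a compressed form of the paper's final expansion of $\tfrac{3}{\Psi}u_x\partial_x(u\Psi)$). Your third paragraph goes beyond the paper: the conjugation identity $\Psi^{-1}\partial_x^k(\Psi f)=(\partial_x+u)^k f$, applied with $f=1$, gives $\partial_x^3\Psi/\Psi=(\partial_x+u)^2u$ and hence $\partial_y u=\partial_x(\partial_x+u)^2u$ directly, which actually \emph{proves} the compact form \eqref{n3} and its arbitrary-order generalization \eqref{buu} that the paper only asserts; this is a genuine improvement in that it eliminates the Leibniz bookkeeping and makes the hierarchy structure transparent, at the cost of introducing one operator lemma.
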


As is well known the third order Hermite polynomials \cite{dattoli}
\begin{equation}\label{H3}
	H_n^{(3)}(x,y)  = n!\sum_{r=0}^{\lfloor\frac{n}{3}\rfloor}\frac{y^r x^{n-3r}}{(n-3r)!r!} 
\end{equation}
satisfies the equation \eqref{hdif} and 
\begin{equation}
	\frac{\partial H_n^{(3)}(x,y)}{\partial x} = n H_{n-1}^{(3)}(x,y),
\end{equation}
therefore, we have that a particular solution for the nonlinear equation \eqref{n3} is given by 
\begin{equation}
	u(x,y) = \frac{n H_{n-1}^{(3)}(x,y)}{H_n^{(3)}(x,y)}.
\end{equation}
This scheme can be used to construct exact particular solutions for higher order nonlinear PDEs. \\

The extension of the method to higher order “heat” equations is fairly straightforward. We find indeed that the higher order Burgers-type equations
\begin{equation}\label{buu}
	\frac{\partial u}{\partial y}  = \frac{\partial}{\partial x}\left( \frac{\partial}{\partial x}+u\right)^{\!m-1} u
\end{equation}
can be linearized by means of the transform \eqref{ch2} to the higher order diffusive equation

\begin{equation}
	\frac{\partial \Psi}{\partial t} = \frac{\partial^m\Psi}{\partial x^m}, \qquad m>3.
\end{equation}
It is well-known that higher order diffusive equations of this type
admits polynomial solutions of the form
\begin{equation}
	H_n^{(m)}(x,y)  = n!\sum_{r=0}^{\lfloor\frac{n}{m}\rfloor}\frac{y^r x^{n-mr}}{(n-mr)!r!}.
\end{equation}
Therefore, by using the HCt, it is possible to solve higher order Burger-type equations \eqref{buu} in a similar way.\\

In this article we pursue an investigation started in Ref. \cite{logistic}, where the authors have studied extensions of the Burgers equations including non-standard derivative forms. Our intention is that of exploring this subject in a wider context, by an appropriate discussion involving not only Hermite but also Laguerre and other forms of hybrid polynomials.\\

To conclude this section, we observe another interesting application of this method in order to obtain solutions based on Hermite polynomials for a more general class of Burgers equations.

\begin{example}
In the recent paper \cite{vag}, the author considered the following Burgers-type equation with a variable coefficient
\begin{equation}
	\frac{\partial u}{\partial t}+	\frac{2}{F(y,t)}u \frac{\partial u}{\partial x} = 	\frac{\partial^2 u}{\partial x^2}+	\frac{\partial^2 u}{\partial y^2}.
\end{equation}
Considering the following Hopf-Cole transformation
\begin{equation}
	u(x,y,t) = -F(y,t) \frac{\partial}{\partial x}\ln(\varphi(x,t)),
\end{equation}
we have that the function $F(y,t)$ and $\varphi(x,t)$ should solve
diffusive equations, respectively
\begin{equation}
		\frac{\partial \varphi}{\partial t} = 	\frac{\partial^2 \varphi}{\partial x^2}
\end{equation}
and
\begin{equation}
	\frac{\partial F}{\partial t} = 	\frac{\partial^2 F}{\partial x^2}.
\end{equation}
Therefore, in view of the connection with the Hermite polynomials, we can construct a particular interesting solution also for the problem of the form 
\begin{equation}
	u(x,y,t) = -H_n(y,t)\frac{nH_{n-1}(x,t)}{H_n(x,t)}.
\end{equation}
\end{example}

In this section we have fixed the essential tools of the formalism, we will employ these methods in the forthcoming sections, aimed at combining HCt, properties of special polynomials and operational methods to derive various forms of non-linear equations, some of which not previously known.

\section{Burgers equations and special polynomials}

The polynomials defined in \eqref{H3} are sometimes referred as lacunary third order Hermite, their complete version is provided by
\begin{equation}
		H_n^{(3)}(x_1,x_2,x_3)  = n!\sum_{r=0}^{\lfloor\frac{n}{3}\rfloor}\frac{H_{n-3r}(x_1,x_2)x_3^r}{(n-3r)!r!} 
\end{equation}
satisfying the recurrences
\begin{equation}
	\frac{\partial}{\partial x_1}H_n^{(3)}(x_1,x_2,x_3) = nH_{n-1}^{(3)}(x_1,x_2,x_3)
\end{equation}
and
\begin{equation}
	\frac{\partial}{\partial x_2}H_n^{(3)}(x_1,x_2,x_3) = 	\frac{\partial^2}{\partial x_1^2}H_n^{(3)}(x_1,x_2,x_3).
\end{equation}

Moreover,
\begin{equation}
		H_n^{(3)}(x_1,0,x_3) = 	H_n^{(3)}(x_1,x_3), \qquad \quad 	H_n^{(3)}(x_1,x_2, 0) = 	H_n(x_1,x_2).
\end{equation}
According to the previously developed discussion, two different Burgers can be associated with the last two heat type equations. 

\begin{prop}
	$\forall x_1,x_2,x_3\in \mathbb{R}$, $\forall n\in\mathbb{N}$, the following PDEs hold
	\begin{equation}\label{p1}
	\frac{\partial u_n}{\partial x_3} = \frac{\partial}{\partial x_1}\left( \frac{\partial}{\partial x_1}+u_n\right)^2 u_n
	\end{equation}
	\begin{equation}\label{p2}
	\frac{\partial u_n}{\partial x_2} = \frac{\partial}{\partial x_1}\left( \frac{\partial}{\partial x_1}+u_n\right) u_n.
	\end{equation}
\end{prop}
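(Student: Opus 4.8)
The plan is to set $\Psi := H_n^{(3)}(x_1,x_2,x_3)$ and to read the proposition's $u_n$ as the Hopf--Cole transform of $\Psi$ in the variable $x_1$, namely
\begin{equation}
u_n = \frac{\partial_{x_1}\Psi}{\Psi} = \frac{\partial}{\partial x_1}\ln H_n^{(3)}(x_1,x_2,x_3),
\end{equation}
exactly as in \eqref{ch2} with $x\mapsto x_1$. With this choice, both \eqref{p1} and \eqref{p2} will follow by applying the paradigm of the Observation separately to the two linear ``heat type'' equations that $\Psi$ satisfies in the variable $x_1$: the second order equation $\partial_{x_2}\Psi = \partial_{x_1}^2\Psi$ (stated above) and the third order equation $\partial_{x_3}\Psi = \partial_{x_1}^3\Psi$.

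First I would establish these two linear equations. The second is given; the third I would verify directly from the series representation of $H_n^{(3)}(x_1,x_2,x_3)$, differentiating term by term and using $\partial_{x_1}H_m(x_1,x_2) = m\,H_{m-1}(x_1,x_2)$ three times, so that $\partial_{x_1}^3$ shifts the lower index of each $H_{n-3r}$ by three and reproduces exactly the action of $\partial_{x_3}$, which lowers the power of $x_3$ by one. The crucial structural point is that the ancillary variable $x_2$ is merely a spectator: since every differentiation in the paradigm is performed with the remaining variables held fixed, the presence of $x_2$ does not alter the algebra.

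Next, with the two linear equations in hand, I would invoke the computations already carried out in the excerpt. For \eqref{p2}, the pair $(x_1,x_2)$ plays the role of $(x,y)$ in the heat equation \eqref{diff}, so the transform turns $\partial_{x_2}\Psi=\partial_{x_1}^2\Psi$ into $\partial_{x_2}u_n = \partial_{x_1}^2 u_n + 2u_n\,\partial_{x_1}u_n$, the compact form of which is $\partial_{x_1}(\partial_{x_1}+u_n)u_n$; this is precisely \eqref{bu} and yields \eqref{p2}. For \eqref{p1}, the pair $(x_1,x_3)$ plays the role of $(x,y)$ in the third order equation \eqref{hdif}, and steps (i)--(iii) of the Observation --- differentiating \eqref{ch2}, substituting the linear evolution equation, and using the identity $\partial_{x_1}\Psi=u_n\Psi$ --- reproduce \eqref{n3} verbatim, i.e. $\partial_{x_3}u_n=\partial_{x_1}(\partial_{x_1}+u_n)^2u_n$, which is \eqref{p1}.

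I expect the main obstacle to be bookkeeping rather than conceptual: the only genuinely new ingredient is the verification that $H_n^{(3)}(x_1,x_2,x_3)$ solves $\partial_{x_3}\Psi=\partial_{x_1}^3\Psi$, after which both Burgers type identities are immediate specializations of the already established \eqref{bu} and \eqref{n3}. The one point demanding care is confirming that the manipulations of the Observation remain valid with $x_2$ frozen and that no cross terms in $x_2$ are generated; since $u_n$ and all derivatives entering the paradigm are taken with respect to $x_1$ together with the evolution variable ($x_2$ or $x_3$), this is guaranteed.
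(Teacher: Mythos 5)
Your proposal is correct and follows essentially the same route as the paper: the paper's own proof is a one-line appeal to setting $u_n = nH_{n-1}^{(3)}/H_n^{(3)}$, which by the stated recurrence $\partial_{x_1}H_n^{(3)} = nH_{n-1}^{(3)}$ is exactly the Hopf--Cole transform $\partial_{x_1}\ln H_n^{(3)}$ you use, combined with the two linear evolution equations in $x_2$ and $x_3$ and the Burgers derivations \eqref{bu} and \eqref{n3} already carried out in the introduction. Your explicit verification of $\partial_{x_3}\Psi = \partial_{x_1}^3\Psi$ from the series is a detail the paper leaves implicit, but it is the right (and only) missing ingredient.
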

\begin{proof}
Eqs. \eqref{p1} and \eqref{p2} are easily proved by setting
\begin{equation}
	u_n(x_1,x_2,x_3) = \frac{n H_{n-1}^{(3)}(x_1,x_2,x_3)}{H_n^{(3)}(x_1,x_2,x_3)}.
\end{equation}
\end{proof}
\begin{coro}
It is furthermore evident that in the case of the non lacunary m-th order Hermite, the following “hierarchical” system of Burgers equation holds 
\begin{equation}
	\frac{\partial u_n}{\partial x_k} = \frac{\partial}{\partial x_1}\left( \frac{\partial}{\partial x_1}+u_n\right)^{k-1} u_n, \qquad 1<k\leq m,
\end{equation}
that admits the solution
\begin{equation}
	u_n(x_1,\dots,x_m) = \frac{n H_{n-1}^{(m)}(x_1,\dots,x_m)}{H_n^{(m)}(x_1,\dots,x_m)}.
\end{equation}
\end{coro}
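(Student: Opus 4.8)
The plan is to carry out the Hopf-Cole paradigm of the Observation for an arbitrary index $k$, so that the whole statement collapses onto a single operator intertwining identity. Set $\Psi=H_n^{(m)}(x_1,\dots,x_m)$ and write $\partial$ for $\partial/\partial x_1$. The lowering recurrence $\partial\Psi=nH_{n-1}^{(m)}=u_n\Psi$ shows that the proposed $u_n$ is precisely the Hopf-Cole field $u_n=\partial\ln\Psi$. As input I would take the heat-type hierarchy obeyed by the complete $m$-th order Hermite, namely $\partial\Psi/\partial x_k=\partial^k\Psi$ for $1<k\le m$; this is the direct generalization of the two recurrences quoted for $H_n^{(3)}$ and is immediate from the generating function $\sum_n (t^n/n!)H_n^{(m)}=\exp\!\big(\sum_{j=1}^m x_j t^j\big)$, on which both $\partial_{x_k}$ and $\partial^k$ act as multiplication by $t^k$.

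First I would differentiate the identity $\partial\Psi=u_n\Psi$ with respect to $x_k$ and insert $\partial\Psi/\partial x_k=\partial^k\Psi$, using $\partial^{k+1}\Psi=\partial^k(\partial\Psi)=\partial^k(u_n\Psi)$ and $\partial\Psi=u_n\Psi$ to reach
\begin{equation}
\frac{\partial u_n}{\partial x_k}=\frac{(\partial^{k+1}\Psi)\,\Psi-(\partial^k\Psi)(\partial\Psi)}{\Psi^2}=\frac{\partial^k(u_n\Psi)-u_n\,\partial^k\Psi}{\Psi}.
\end{equation}
The crux of the argument is then the intertwining identity $\Psi^{-1}\partial^j(\Psi f)=(\partial+u_n)^j f$, valid for every $j\ge 0$ and every smooth $f$. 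It follows by induction on $j$ from the single-step relation $\partial(\Psi f)=(\partial\Psi)f+\Psi\,\partial f=\Psi(\partial+u_n)f$, that is $\Psi^{-1}\partial(\Psi\,\cdot)=(\partial+u_n)$, composed $j$ times.

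Applying this identity with $f=u_n$ and with $f=1$ converts the two terms above into $(\partial+u_n)^k u_n$ and $u_n(\partial+u_n)^k 1$. Since $(\partial+u_n)1=u_n$, one has $(\partial+u_n)^k 1=(\partial+u_n)^{k-1}u_n$, and factoring $(\partial+u_n)^{k-1}$ out on the right yields
\begin{equation}
\frac{\partial u_n}{\partial x_k}=(\partial+u_n)^k u_n-u_n(\partial+u_n)^{k-1}u_n=\big[(\partial+u_n)-u_n\big](\partial+u_n)^{k-1}u_n=\partial(\partial+u_n)^{k-1}u_n,
\end{equation}
which is exactly the claimed hierarchy; the case $k=2$ reproduces \eqref{p2} and $k=3$ reproduces \eqref{p1}. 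The point demanding care, such as it is, is the non-commutativity of $(\partial+u_n)$ with multiplication by $u_n$: the factorization is legitimate only because $(\partial+u_n)^{k-1}$ sits to the right of both terms, so that $(\partial+u_n)-u_n=\partial$ can be peeled off on the left without commuting anything past it. The solution formula for $u_n$ is then inherited directly from the Hopf-Cole substitution with which we began.
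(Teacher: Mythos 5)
Your proof is correct, and it follows the same Hopf--Cole paradigm that the paper lays out in its Observation for the third-order case; the difference is that the paper never actually proves this corollary (it is declared ``evident,'' and even the Proposition it follows is proved only by the instruction to ``set $u_n = nH_{n-1}^{(3)}/H_n^{(3)}$''), whereas you supply the general argument. The ingredient that makes the arbitrary-$k$ case go through in one stroke is your intertwining identity $\Psi^{-1}\partial^{\,j}(\Psi f)=(\partial+u_n)^j f$, proved by iterating $\Psi^{-1}\partial(\Psi\,\cdot)=\partial+u_n$; the paper instead expands $\partial^3(u\Psi)$ by Leibniz and regroups by hand in Eq.~\eqref{lab12}, a computation that does not scale transparently to general $k$. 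Your version also makes explicit the two facts the paper leaves tacit, namely the hierarchy $\partial_{x_k}\Psi=\partial_{x_1}^k\Psi$ read off from the generating function, and the operator bookkeeping $(\partial+u_n)^k 1=(\partial+u_n)^{k-1}u_n$ together with the left-factorization $(\partial+u_n)-u_n=\partial$, where you correctly flag that no illegitimate commutation of $u_n$ past $(\partial+u_n)^{k-1}$ is involved. In short: same method, but your write-up is the complete and more systematic version of what the paper only sketches for $m=3$.
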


The examples, we have considered so far, deal with polynomials belonging to the Hermite family. The wealth of the existing polynomials can be exploited as a benchmark for the associated Burgers like equations.
To this aim we consider the two variable Laguerre polynomials.\\

We recall that the two-variable Laguerre polynomials 
\begin{equation}
	L_n(x,t) = n!\sum_{r=0}^{n}\frac{ t^{n-r} x^{r}}{(n-r)!(r!)^2}
\end{equation}
solves the Laguerre diffusive equation \cite{torre, SL}
\begin{equation}\label{ladif}
	\frac{\partial F}{\partial t} = \left(	\frac{\partial}{\partial x}x	\frac{\partial}{\partial x}\right)F.
\end{equation}
\begin{Obs}
In the recent paper \cite{logistic}, the authors have obtained the related Burgers-type equation by using a scheme similar to the one used in the previous section. Indeed,  by using the HCt 
\begin{equation} \label{cole}
	u = \frac{\partial_x F}{F}
\end{equation}
after keeping the (partial) time derivative of both sides of \eqref{cole} with respect to the variable $t$, we obtain
\begin{equation}\label{cole1}
	\frac{\partial u}{\partial t} = \frac{\displaystyle\left(\frac{\partial}{\partial t}\frac{\partial F}{\partial x}\right)F-\frac{\partial F}{\partial x}\frac{\partial F}{\partial t}}{F^2}.
\end{equation}
Being $t$ and $x$ independent variables, the associated derivatives commute, so that, also on account of Eq.\eqref{ladif}, we find
\begin{equation}
	\frac{\partial}{\partial t}\frac{\partial F}{\partial x} = \frac{\partial}{\partial x}\left(\frac{\partial}{\partial x}x\frac{\partial F}{\partial x}\right).
\end{equation}
Thus, going back to \eqref{cole1} and by using the fact that
$\partial_x F = u F$, we obtain 
\begin{equation}
	\frac{\partial u}{\partial t} = \frac{1}{F}\left(\frac{\partial}{\partial x}\left(\frac{\partial}{\partial x}(xuF)-u\frac{\partial}{\partial x}x\frac{\partial F}{\partial x} \right) \right).
\end{equation}
Working out the derivatives we eventually end up with
\begin{equation}\label{lanon}
	\frac{\partial u}{\partial t}=\frac{\partial}{\partial x}x\frac{\partial u}{\partial x}+\frac{\partial u}{\partial x}+\left(1+x\frac{\partial}{\partial x}\right) u^2
\end{equation}
which is a non-trivial non-linear reaction-diffusion type equation with known solution, once the explicit solution of the linear Laguerre-type equation \eqref{ladif} is given.
This means that we can obtain interesting exact solutions for the non-trivial nonlinear equation \eqref{lanon} by means of the Laguerre polynomials. A solution for \eqref{lanon} will be given by
\begin{equation}\label{lagct}
	u_n(x,t) = \frac{\partial_x L_n(x,t)}{L_n(x,t)}
\end{equation}
\end{Obs}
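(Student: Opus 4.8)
The plan is to run the Hopf--Cole paradigm of the preceding section, adapting the constant-coefficient reasoning to the variable-coefficient Laguerre operator $L := \frac{\partial}{\partial x}x\frac{\partial}{\partial x}$. First I would introduce the logarithmic variable $u = \partial_x \ln F = \partial_x F/F$, exactly as in \eqref{cole}, so that the fundamental identity $\partial_x F = uF$ is available throughout. The key simplification I would exploit is to differentiate the logarithm rather than the quotient: since $x$ and $t$ are independent, the mixed partials commute, and hence
\begin{equation}
\frac{\partial u}{\partial t} = \frac{\partial}{\partial t}\frac{\partial}{\partial x}\ln F = \frac{\partial}{\partial x}\frac{\partial}{\partial t}\ln F = \frac{\partial}{\partial x}\!\left(\frac{\partial_t F}{F}\right).
\end{equation}
This routes the whole computation through the single scalar $\partial_t F/F$, avoiding the heavier quotient-rule bookkeeping of \eqref{cole1}.

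Next I would insert the Laguerre diffusive equation \eqref{ladif}, which replaces $\partial_t F$ by $LF$, and re-express $LF/F$ purely in terms of $u$. Using $\partial_x F = uF$ one gets $\partial_x^2 F = (\partial_x u + u^2)F$, and then a single application of the product rule yields
\begin{equation}
\frac{LF}{F} = \frac{1}{F}\frac{\partial}{\partial x}\!\left(x\,\partial_x F\right) = u + x\,\partial_x u + x\,u^2.
\end{equation}
Substituting this into the previous display gives $\partial_t u = \partial_x\big(u + x\,\partial_x u + x\,u^2\big)$, and expanding the three derivatives reproduces term by term the right-hand side of \eqref{lanon}, namely $\partial_x(x\partial_x u) + \partial_x u + (1 + x\partial_x)u^2$.

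Finally, since the two-variable Laguerre polynomials $L_n(x,t)$ satisfy the Laguerre diffusive equation \eqref{ladif} by construction, the function $u_n$ defined in \eqref{lagct} is obtained from a genuine solution $F = L_n$ of that linear equation, and therefore inherits \eqref{lanon} as an exact solution. I expect the computation to be essentially mechanical; the one place where care is genuinely required is the passage from $F$-derivatives to $u$-derivatives inside $L$, where the non-constant coefficient $x$ must be differentiated jointly with the product $xuF$. That product-rule step is the natural locus for stray factors, and is the only substantive obstacle, since the logarithmic-derivative shortcut collapses the quotient rule and leaves the remainder routine.
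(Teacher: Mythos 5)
Your proposal is correct and follows essentially the same route as the paper: apply the Hopf--Cole substitution $u=\partial_x F/F$, insert the Laguerre diffusion equation \eqref{ladif}, rewrite everything in terms of $u$ via $\partial_x F = uF$, and expand to obtain \eqref{lanon}, then observe that $L_n$ solves \eqref{ladif} so that \eqref{lagct} solves \eqref{lanon}. The only difference is organizational: you differentiate $\ln F$ first so that $\partial_t u=\partial_x\!\left(LF/F\right)$ and the quotient rule never appears, whereas the paper applies the quotient rule as in \eqref{cole1} and then simplifies $\tfrac{1}{F}\bigl[\partial_x\partial_x(xuF)-u\,\partial_x(xuF)\bigr]$ to the same expression -- a cosmetic streamlining, not a different argument.
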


Observe that another interesting generalization of the Burgers equation is given by 
\begin{equation}\label{gio}
	\frac{\partial u}{\partial t} = \frac{\partial}{\partial x}x\frac{\partial u}{\partial x} +x	\left(\frac{\partial u}{\partial x}\right)^2,
\end{equation}
which can be linearized and reduced to the equation \eqref{ladif} by means of the transfor\-mation $u = \ln(h(x,t))$. Also in this case we can construct an explicit solution involving Laguerre polynomials to
\eqref{gio} as $u(x,t) = \ln(L_n(x,t))$.\\

Further examples of special polynomials exhibiting PDE generalizing those presented before, eventually linked to Burgers counterparts, are discussed below.  

\begin{example}
A family of polynomials of “hybrid” type is provided by (see \cite{datc})
\begin{equation}\label{2h2}
_2 \mathcal{L}_n(x,y)  = n!\sum_{r=0}^{\lfloor\frac{n}{2}\rfloor}\frac{y^r x^{n-2r}}{(n-2r)!r!^2},
\end{equation}
which has both and Hermite and Laguerre “imprinting”, for this reasons they satisfy the PDE
\begin{equation}\label{lagdi}
	\frac{\partial}{\partial y}y\frac{\partial}{\partial y}\Psi(x,y) = 
\frac{\partial^2}{\partial x^2}	\Psi(x,y),
\end{equation}
under the condition $\Psi(x,0) = x^n$. The previous equation deserves a comment. If it is indeed viewed as an initial value problem, its formal solution can be cast in the form
\begin{equation}
	\Psi(x,y)= C_0\left( y\frac{\partial^2}{\partial x^2}\right) x^n,
\end{equation}
where
\begin{equation}
	C_0(z)= \sum_{r=0}^\infty \frac{z^{r}}{r!^2}
\end{equation}
is a Bessel like function, satisfying the eigenvalue problem
\begin{equation}
	\frac{d}{dz}z\frac{d}{dz}C_0(\lambda z) = \lambda C_0(\lambda z).
\end{equation}
The search for the Burgers like form associated with polynomials of the type \eqref{2h2} can be accomplished by the use of the same means exploited for the case of non-hybrid families. The use of the transformation in \eqref{lagdi}
\begin{equation}
	u(x,y) = \ln(\Psi(x,y))
\end{equation}
 leads e.g. to
\begin{equation}
		\frac{\partial}{\partial y}y\frac{\partial u}{\partial y}+ y\left(\frac{\partial u}{\partial y}\right)^2 = \frac{\partial^2 u}{\partial x^2}+\left(\frac{\partial u}{\partial x}\right)^2.
\end{equation}
\end{example}

In this section we have seen how different forms of Burgers equations can be obtained by handling the PDE satisfied by various families of special polynomials, further comments will be given in the forthcoming concluding section.

\section{Final Comments}

In this article, we have touched different aspects regarding the impact of non-linear PDE on the theory of special polynomials. We have proved that the wealth of the relevant properties reflects itself on a comparably rich plethora of Burgers type equations linearized after a CHt involving special polynomials.
In order to provide a further flexibility of the method we consider the further example below.

\begin{example}
	We consider the PDE
\begin{equation}\label{com}
\left\lbrace \begin{split}
& \frac{\partial}{\partial y} \Psi(x,y) = \left(\alpha \frac{\partial}{\partial x}+\beta \frac{\partial^2}{\partial x^2}+\gamma \frac{\partial^3}{\partial x^3}\right)\Psi(x,y) \\[1.1ex]
& \Psi(x,0) = f(x),
\end{split}\right. 
\end{equation}
whose formal solution writes
\begin{equation}
	\Psi(x,y) = \exp\left\lbrace \displaystyle y\left(\alpha \frac{\partial}{\partial x}+\beta \frac{\partial^2}{\partial x^2}+\gamma \frac{\partial^3}{\partial x^3}\right)\right\rbrace f(x).
\end{equation}
Taking into account the generating function of the Higher order Hermite polynomials, that is \cite{dattoli}
\begin{equation}
	e^{x_1t+x_2t^2+x^3 t^3} = \sum_{n=0}^\infty \frac{t^n}{n!}H_n(x_1,x_2,x_3),
\end{equation}
the explicit solution of Eq. \eqref{com} reads
\begin{equation}
	\Psi(x,t) = \sum_{n=0}^\infty \frac{1}{n!}H_n(\alpha y,\beta y, \gamma y)\frac{\partial^n}{\partial x^n}f(x)
	\end{equation}
which for $f(x) = x^n$ reduces  eventually to \cite{dattoli}
\begin{equation}
	\Psi(x,y) = H_n(x+ \alpha y, \beta y, \gamma y). 
\end{equation}
Accordingly, the function
\begin{equation}
	u_n(x,y) = \frac{n H_{n-1}(x+ \alpha y, \beta y, \gamma y)}{H_{n}(x+ \alpha y, \beta y, \gamma y)}
\end{equation}
is one of the solutions of the non-linear Burgers associated to the Eq. \eqref{com}
\begin{equation}
\frac{\partial u_n}{\partial y}	= \frac{\partial}{\partial x}\bigg[\alpha + \beta\left(\frac{\partial}{\partial x}+u_n\right)+\gamma \left(\frac{\partial}{\partial x}+u_n\right)^2\bigg]u_n,
\end{equation}
which is also a consequence of the previously quoted “hierarchical” properties.
\end{example}

Before closing the article, we would like to comment on the role of the functions
\begin{equation}\label{Phinm}
	\Phi_n^{(m)}(x,y) = \frac{nH_{n-1}^{(m)}(x,y)}{H_{n}^{(m)}(x,y)}
\end{equation}
which play a role less trivial than it might appear at first glance. They represent indeed a general form of rational solutions of Burgers like equations.  The interest for these form of solutions is motivated by the fact that the non-linear equations we have dealt with are simple analogue of the Navier-Stokes equations, and can be exploited as test for the relevant numerical solutions, see e.g. \cite{guz}. Even though the problem deserves a more appropriate treatment, we will afford in a forthcoming paper, we note that, for example
\begin{align}
\nonumber &	\Phi_2^{(2)}(x,y) = \frac{2x}{2y+x^2},\\
\nonumber &\Phi_4^{(2)}(x,y) = \frac{4x^3+24xy}{x^4+12x^2y+12y^2},\\
\nonumber &\Phi_4^{(4)}(x,y) = \frac{6y+x^3}{6xy+\frac{x^4}{4}},\\
\nonumber & \Phi_9^{(4)}(x,y) = \frac{9.072\cdot 10^4 x^2 y^2 + 3.024\cdot 10^3 x^5 + 9 x^8}{6.048\cdot 10^4 y^3 + 3.024\cdot 10^4 x^3 y^2 + 504 x^6 + x^9}
\end{align}
are rational solutions of the Burgers equation \eqref{bu} which can be compared with analogous forms, obtained e.g. in \cite{av}, \cite{kud1}, \cite{kud} and \cite{zuparic}.
The behavior of these functions is given in Figs. \ref{Fig1} where we have reported the plots of $\Phi_n^{(m)}(x,.)$ vs. x , at fixed y for different values of the index $n$ and order $m$. The relevant behavior is that exhibited by the solutions reported in Ref. \cite{av}, where the problem of deriving  new analytical solutions of rational type has been addressed. A more exhaustive 3-D view is given in Figs. \ref{Fig2}.\\

\begin{figure}[h]
	\centering
	\begin{subfigure}[c]{0.48\textwidth}
		\centering
		\includegraphics[width=0.8\linewidth]{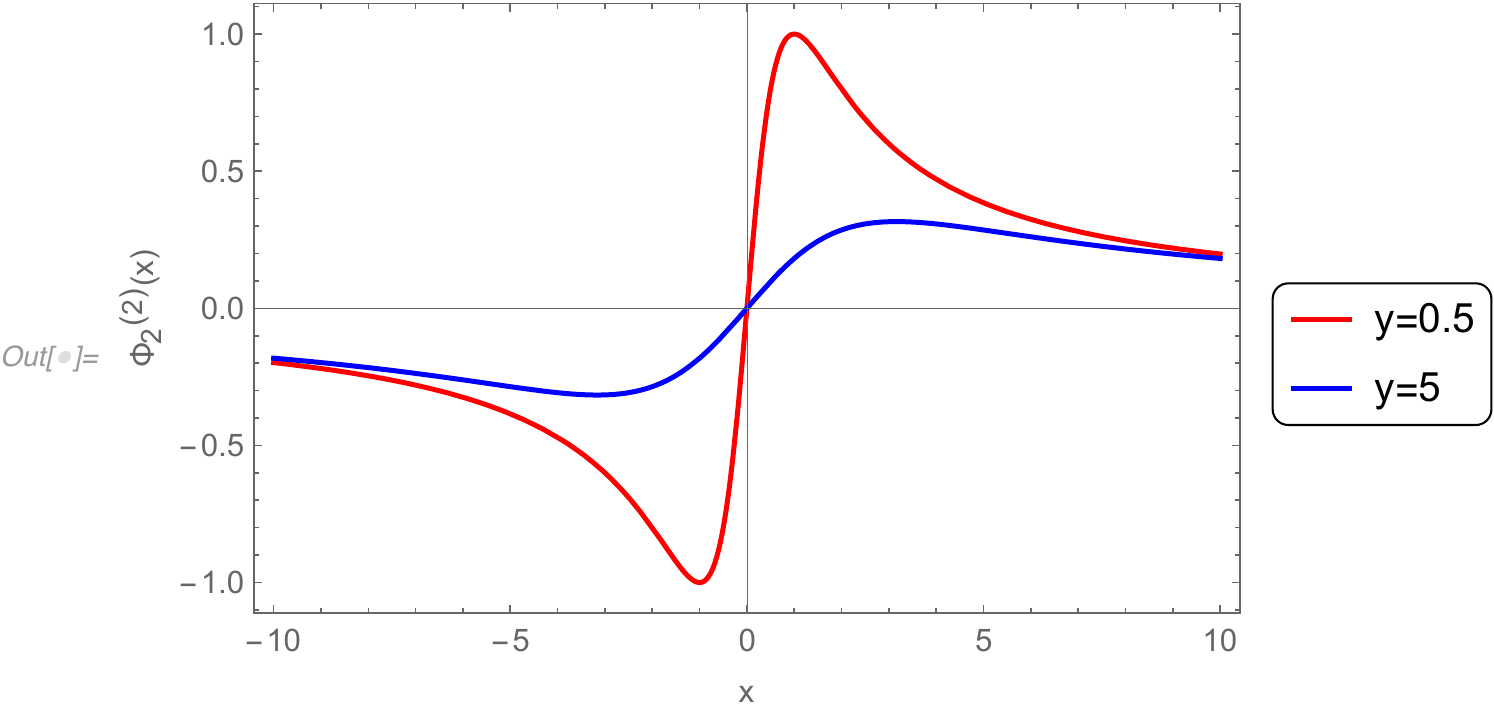}
		\caption{$n=m=2$}
	\end{subfigure}
	\begin{subfigure}[c]{0.48\textwidth}
		\centering
		\includegraphics[width=0.8\linewidth]{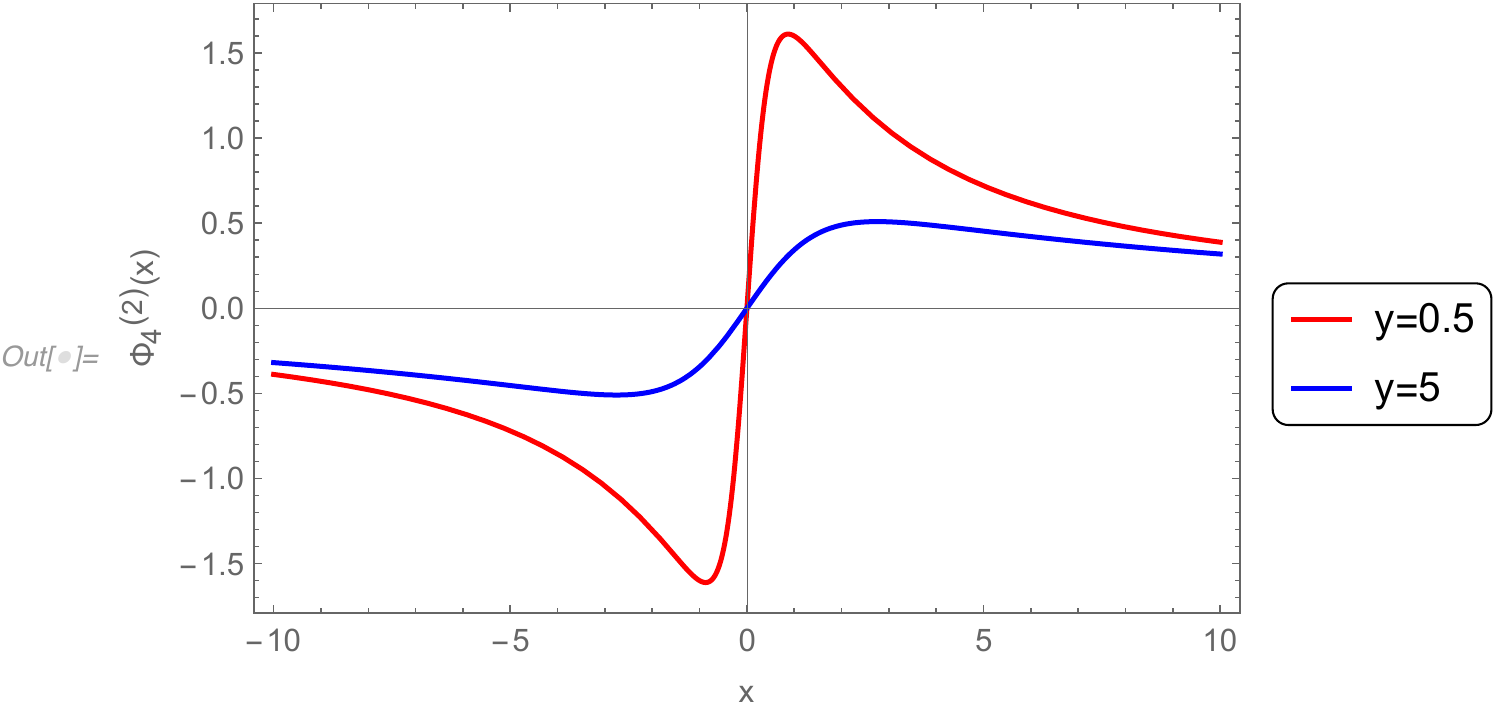}
		\caption{$n=4,m=2$}
	\end{subfigure}
	\begin{subfigure}[c]{0.48\textwidth}
	\centering
	\includegraphics[width=0.8\linewidth]{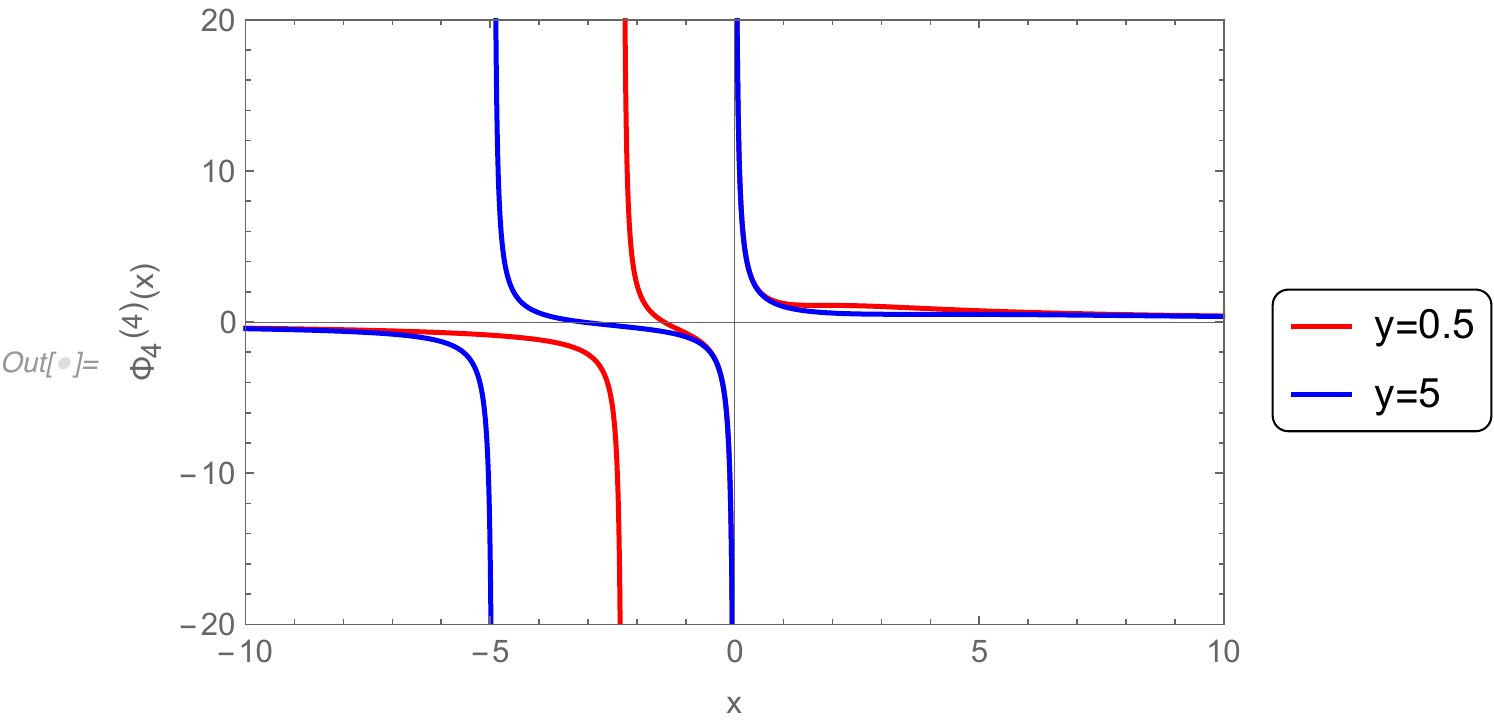}
	\caption{$n=m=4$}
\end{subfigure}
	\begin{subfigure}[c]{0.48\textwidth}
	\centering
	\includegraphics[width=0.8\linewidth]{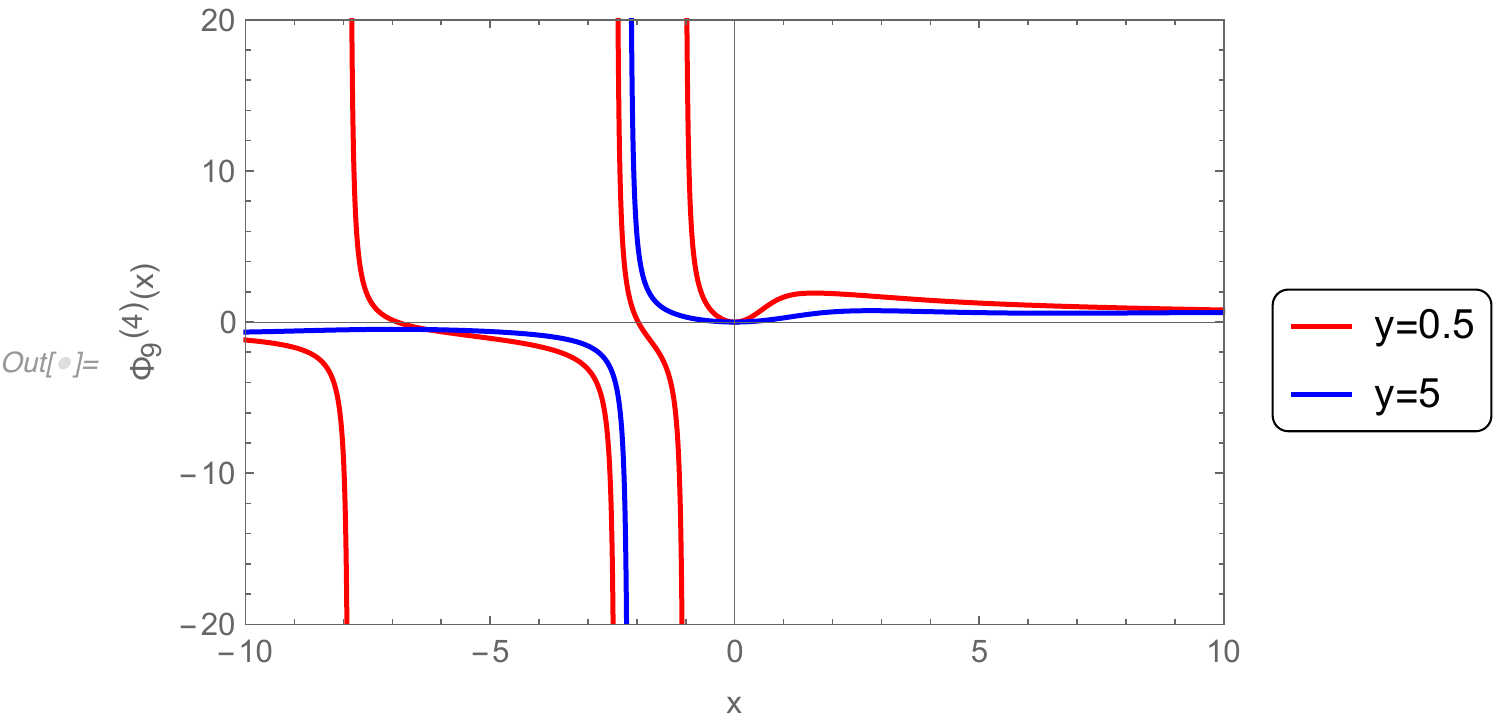}
	\caption{$n=9,m=4$}
\end{subfigure}
	\caption{Comparison of $\Phi_n^{(m)}(x,y)$ vs. $x$ for fixed $y$ values and different $n$ and $m$ values.}\label{Fig1} 
\end{figure}

\begin{figure}[h]
	\centering
	\begin{subfigure}[c]{0.4\textwidth}
		\centering
		\includegraphics[width=0.7\linewidth]{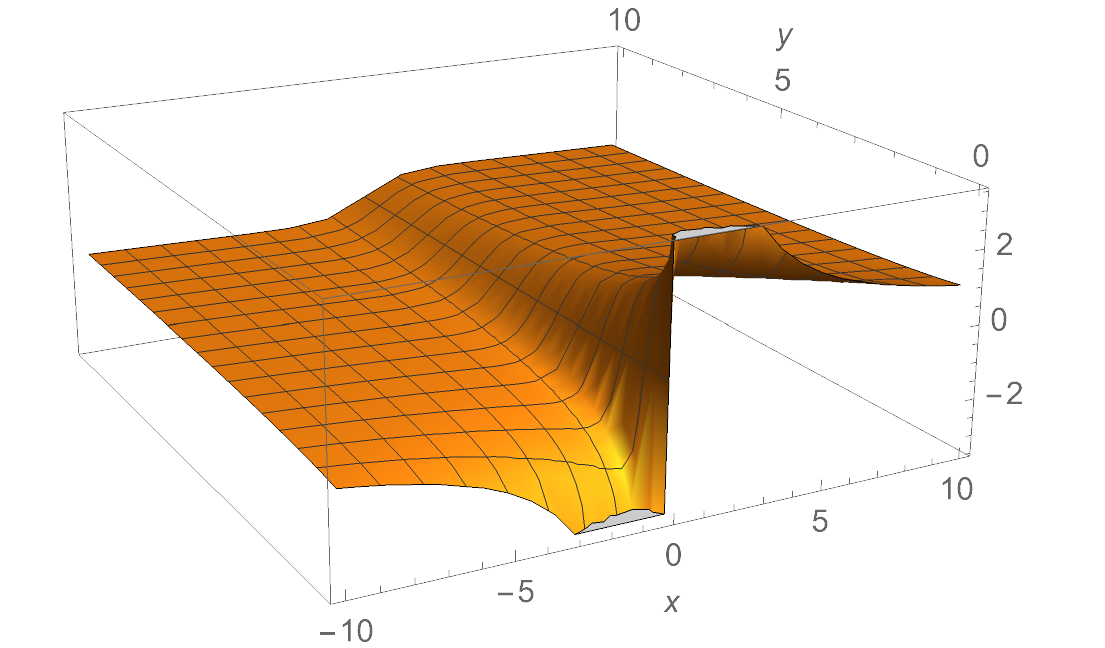}
		\caption{$n=10,m=2$}
	\end{subfigure}
	\begin{subfigure}[c]{0.4\textwidth}
	\centering
	\includegraphics[width=0.7\linewidth]{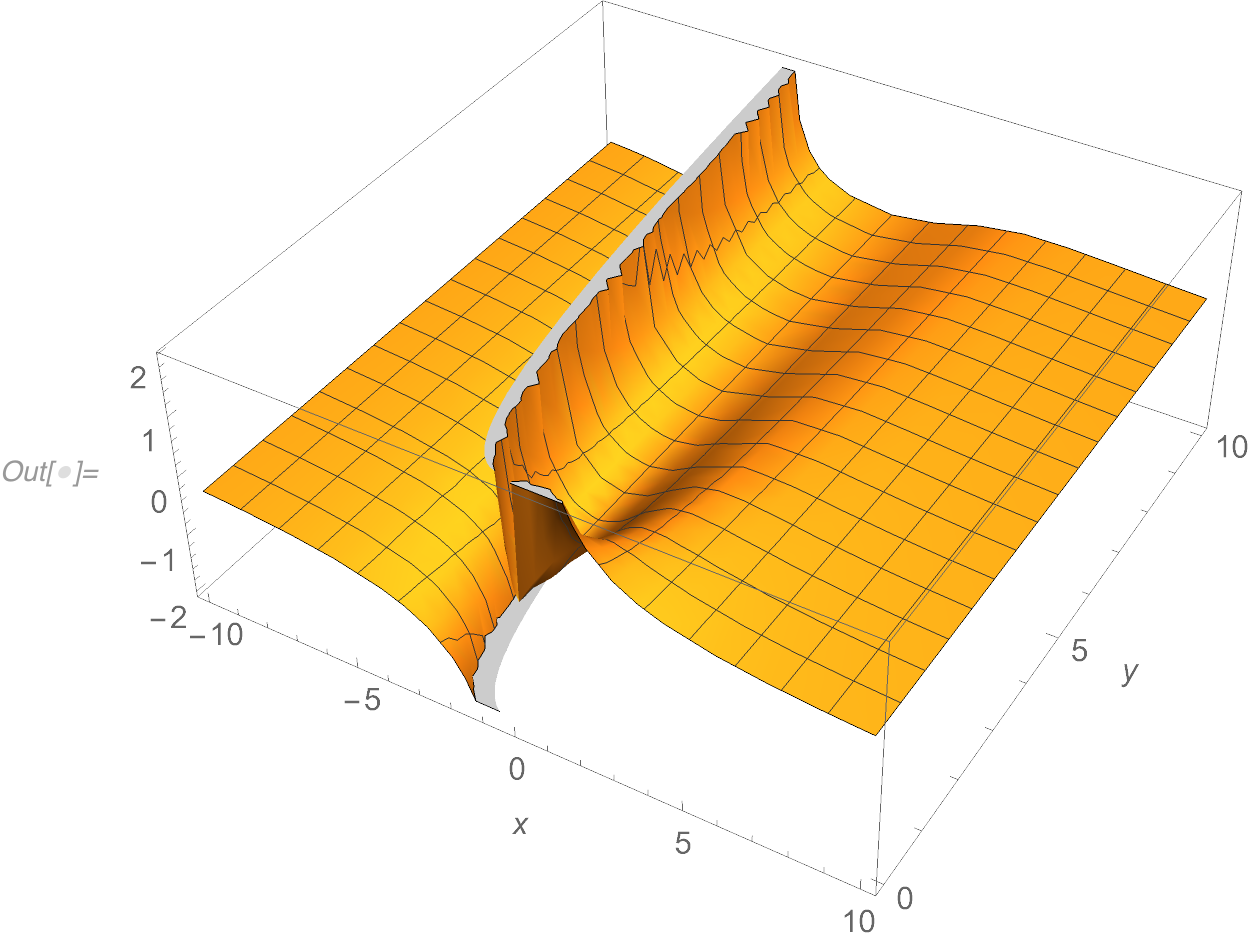}
	\caption{$n=m=3$}
\end{subfigure}
	\begin{subfigure}[c]{0.4\textwidth}
		\centering
		\includegraphics[width=0.7\linewidth]{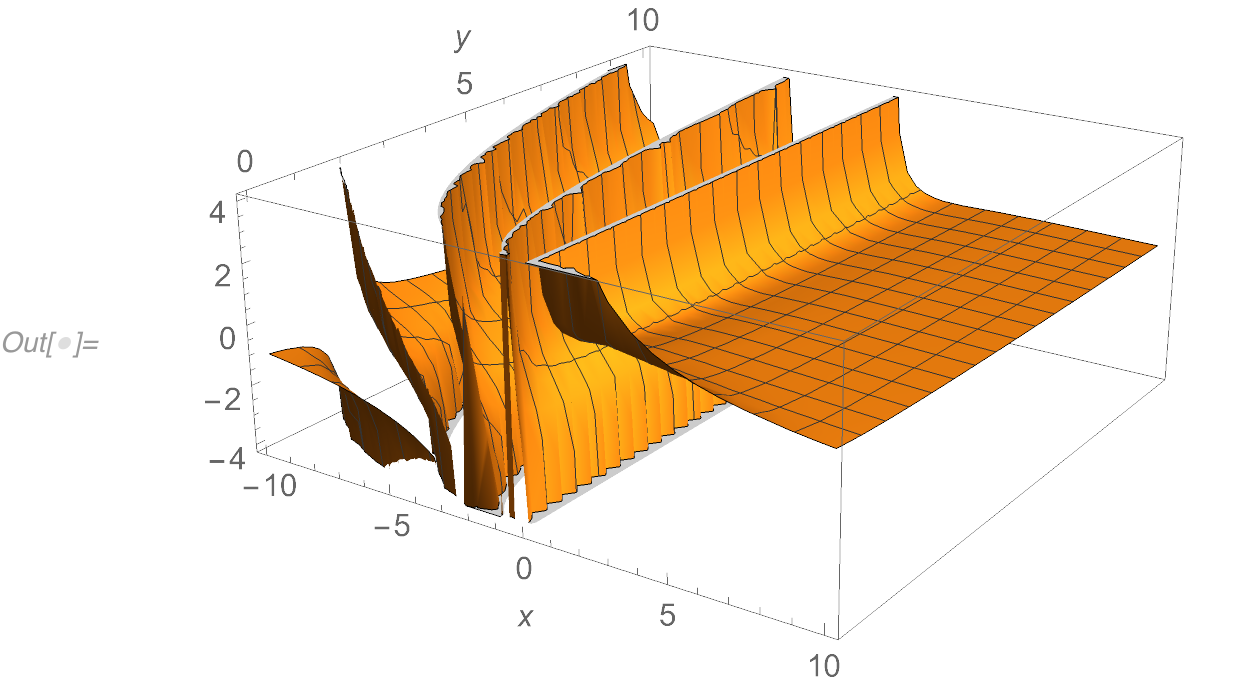}
		\caption{$n=10,m=3$}
	\end{subfigure}
	\begin{subfigure}[c]{0.4\textwidth}
	\centering
	\includegraphics[width=0.7\linewidth]{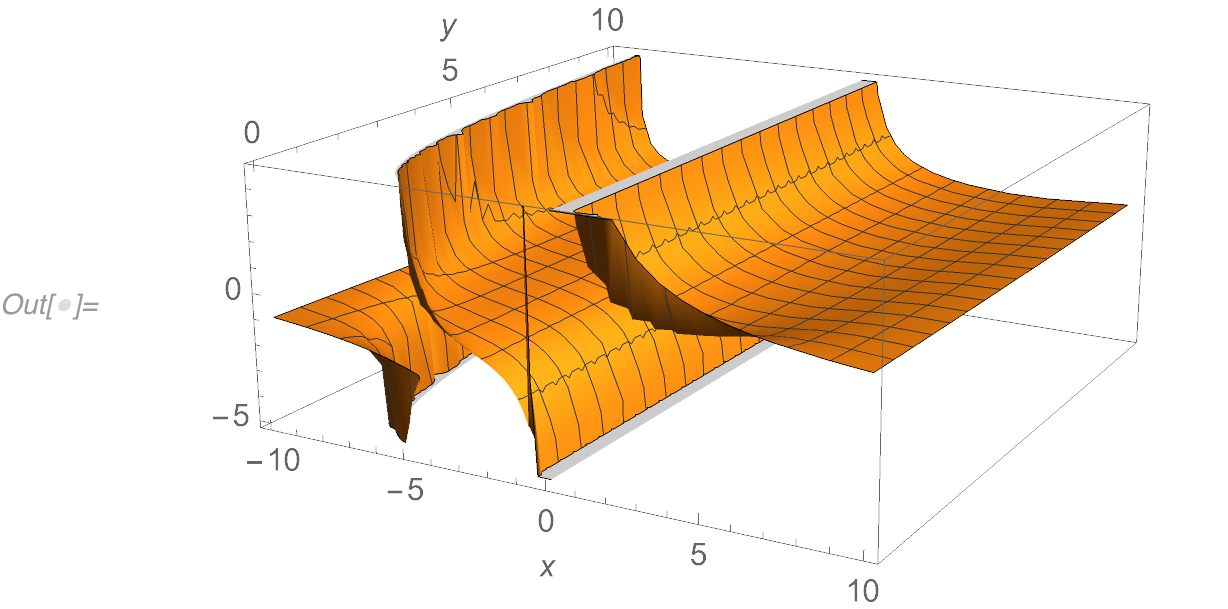}
	\caption{$n=10,m=7$}
\end{subfigure}
	\caption{Eq. \eqref{Phinm} for different $n$ and $m$ values.}\label{Fig2} 
\end{figure}

\begin{os}
The same considerations apply to the case of the two variable Laguerre. Examples of rational solutions of the Burgers type associated with the Laguerre diffusion equation (given in Eq. \eqref{lagct}) are listed below for $n = 3$ and $n = 7$

\begin{equation}\label{u3f}
u_3(x,y) = \frac{\frac{x^2}{12}+\frac{xy}{2}+\frac{y^2}{2}}{\frac{x^3}{36}+\frac{x^2y}{4}+\frac{xy^2}{2}+\frac{y^3}{6}}
\end{equation}

\begin{equation}\label{u7f}
	u_7(x,y) = \frac{\frac{x^6}{3628800}+\frac{x^5y}{86400}+
\frac{x^4y^2}{5760}+\frac{x^3y^3}{864}+\frac{x^2y^4}{288}+\frac{xy^5}{240}+\frac{y^6}{720}}{\frac{x^7}{25401600}+\frac{x^6y}{518400}+\frac{x^5y^2}{28800}+\frac{x^4y^3}{3456}+\frac{x^3y^4}{864}+\frac{x^2y^5}{480}+\frac{xy^6}{720}+\frac{y^7}{5040}}.
\end{equation}
The relevant plots are given in Figs. \ref{Fig3}. The structure is not dissimilar from that exhibited by the Hermite counterparts.
More appropriate comments will be provided elsewhere.

\begin{figure}[h]
	\centering
	\begin{subfigure}[c]{0.48\textwidth}
		\centering
		\includegraphics[width=0.8\linewidth]{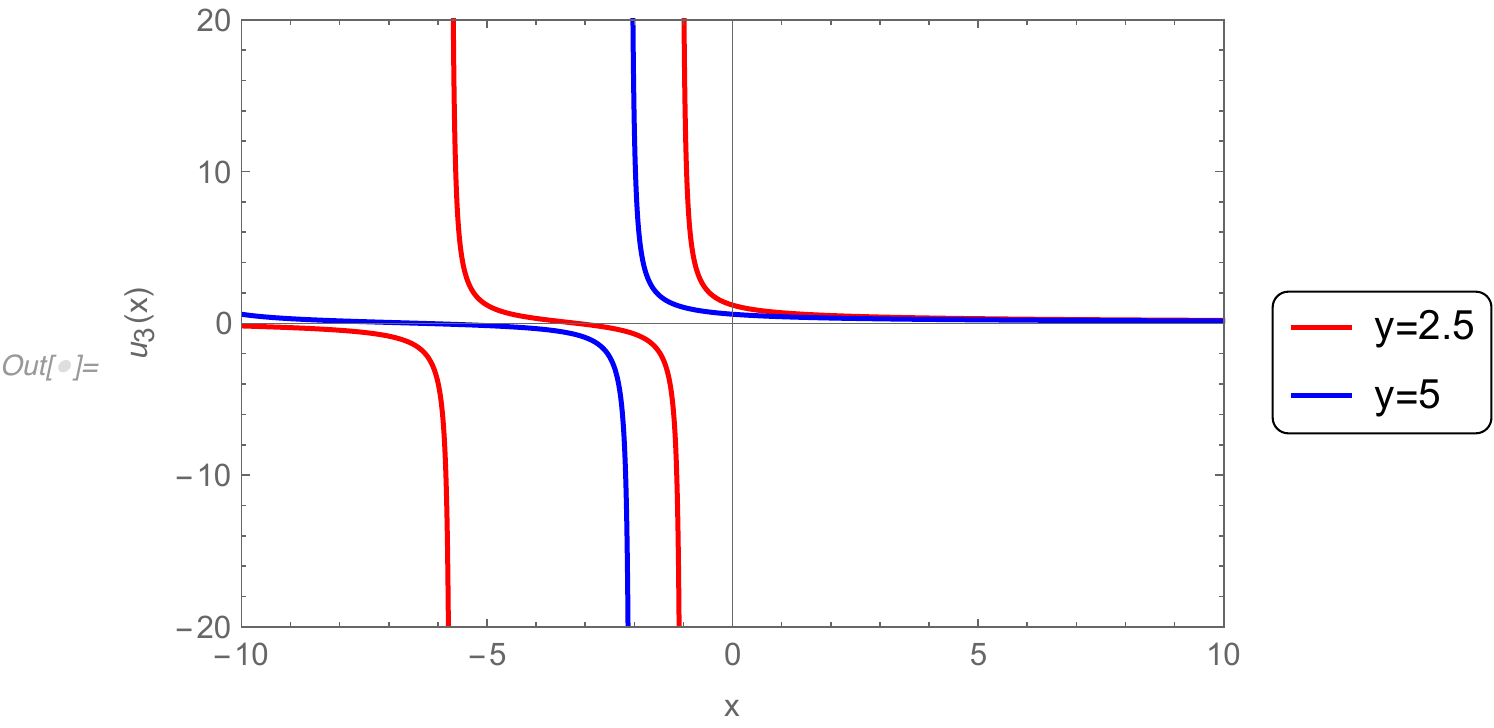}
		\caption{$n=3$}
	\end{subfigure}
	\begin{subfigure}[c]{0.48\textwidth}
		\centering
		\includegraphics[width=0.8\linewidth]{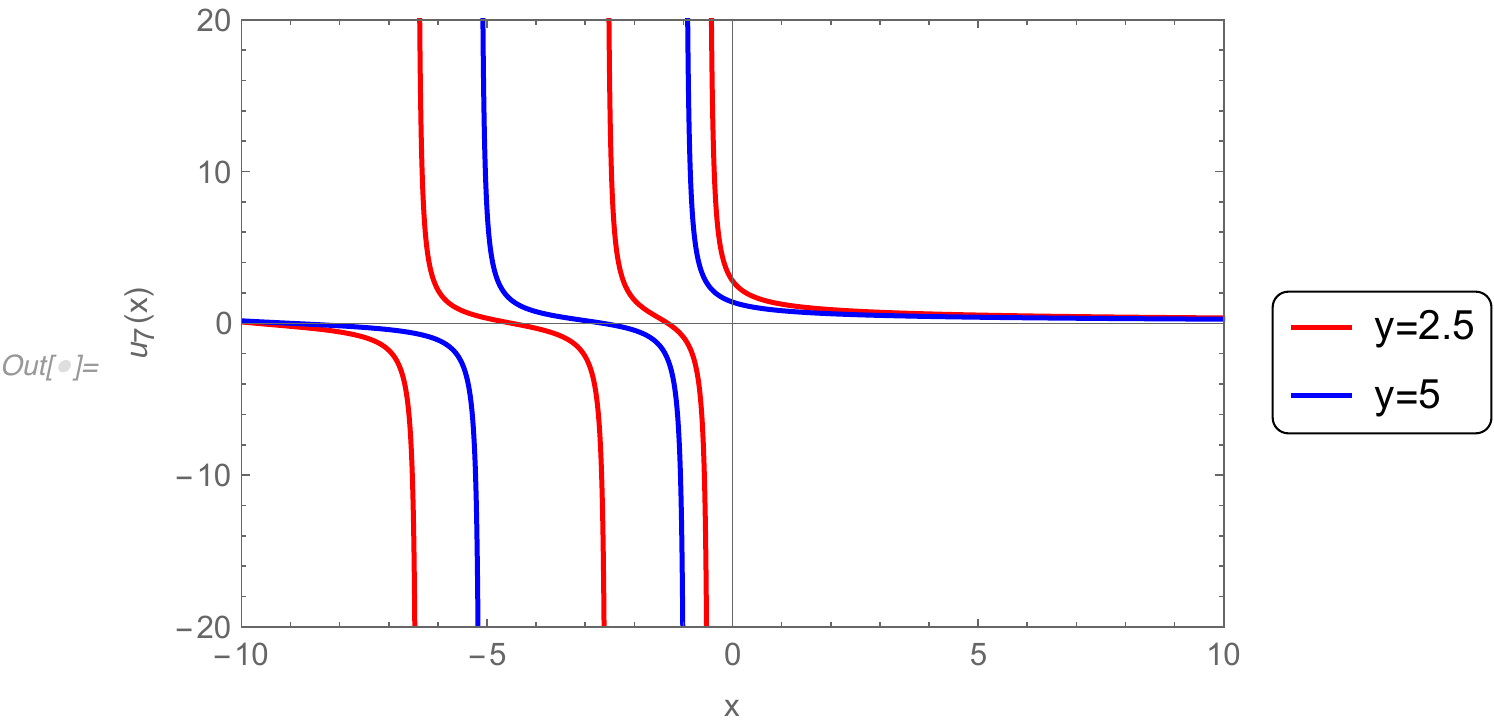}
		\caption{$n=7$}
	\end{subfigure}
	\caption{Eqs. \eqref{u3f}-\eqref{u7f} vs. $x$ for fixed $y$ values}\label{Fig3} 
\end{figure}
\end{os}

We have so far discussed the use of Hermite polynomials to construct exact solutions of Burgers  equations. This point of view can be reversed. The wealth of the properties of the  non-linear transformation we have dealt with can be exploited to obtain previously unknown properties between Hermite polynomials, like e. g.
\begin{equation}
	\left(\frac{\partial}{\partial x}+F_n(x,y)\right)F_n(x,y) = S_n(x,y),
\end{equation}
where
$$F_n(x,y) = \frac{(n-1)H_{n-2}(x,y)}{H_{n-1}(x,y)}$$
and
$$S_n(x,y) = \frac{(n-1)(n-2)H_{n-3}(x,y)}{H_{n-1}(x,y)},$$
which is a particular case of more general identities obtainable  from the handling of the Burgers auxiliary functions.
These problems and the construction of new exact solutions of the Burgers equation in terms of Hermite polynomials will be carefully studied in a forthcoming dedicated paper.


\begin{thebibliography}{99}
 	\bibitem{app} P. Appèll, J. Kampé de Feriét, Fonctions Hypergéometriqués et Hypersphériques Polynome d’Hermite,
 	Gauthier-Villars, Paris, 1926
 	
 	\bibitem{av} V. I. Avrutskiy, V. P. Krainov. Rational solutions of (1+ 1)-dimensional Burgers equation and their asymptotic, arXiv preprint arXiv:1910.05488 (2019).
 	
 	
 	\bibitem{dattoli} D. Babusci, G. Dattoli, S. Licciardi, \& E. Sabia (2019). Mathematical methods for physicists. World Scientific.
 	
 	\bibitem{cao} X. Cao, \& C. Xu, (2010, January). A Bäcklund transformation for the Burgers hierarchy. In Abstract and Applied Analysis (Vol. 2010). Hindawi.
 	
 	\bibitem{torre} G. Dattoli, A. Torre,
 	Operational methods and two-variable Laguerre polynomials, Atti Accad. Sci. Torino Cl. Sci. Fis. Mat. Natur., 132, 1-7(1998)
 	
 	
 	\bibitem{datc} G. Dattoli, S. Lorenzutta, P.E. Ricci, C. Cesarano, (2004). On a family of hybrid polynomials. Integral Transforms and Special Functions, 15(6), 485-490.
 	
 	\bibitem{levi} G. Dattoli, D. Levi, (2006). Heat Polynomials, Umbral Correspondence and Burgers Equations. arXiv preprint nlin/0610076.
 	
 	\bibitem{logistic} G. Dattoli, R. Garra, A note on differential equations of logistic type, arXiv preprint arXiv:2308.06135 (2023).
 	
 	\bibitem{SL} G. Dattoli, S. Licciardi, R.M. Pidatella, Theory of Generalized Trigonometric functions: From Laguerre to Airy forms, J. Math. Anal. Appl., vol. 468, Issue 1, 2018, pp. 103--115.
 	
 	\bibitem{guz} D.S. Guzhev, N.N. Kalitkin,  (1995), Burgers equation is a test for numerical methods. Matematicheskoe modelirovanie, 7(4), 99-127.
 	
 	\bibitem{herm}  Ch. Hermite, Sur un nouveau developpement en sèries de fonction, CRAS 5 (1864) 93
 	
 	\bibitem{kud1} N. A. Kudryashov, D. I.  Sinelshchikov, (2009). Exact solutions of equations for the Burgers hierarchy. Applied mathematics and computation, 215(3), 1293-1300.
 	
 	\bibitem{kud} N. A. Kudryashov, Generalized Hermite polynomials for the Burgers hierarchy and point vortices, Chaos, Solitons and Fractals 151 (2021) 111256
 	
    \bibitem{pino} S. Licciardi, G. Dattoli, G. (2022). Guide To The Umbral Calculus, A Different Mathematical Language. World Scientific.
 	
 	\bibitem{Roman} S. Roman, The umbral calculus, Pure and Applied Mathematics, 111.Academic Press, Inc. [Harcourt Brace Jovanovich, Publishers], New York, 1984
 	
 	\bibitem{sea} J. B. Seaborn, \emph{Differential Equations and Special Functions}, In: \emph{Hypergeometric Functions and Their Applications}. Texts in Applied Mathematics, vol 8. Springer, New York, NY. https://doi.org/10.1007/978-1-4757-5443-8$\_2$
 	
 	\bibitem{talm} J.D. Talman, Special Functions: A Group Theoretic Approach; Based on Lectures and with an Introduction by E. P. Wigner; Mathematical Physics Monograph Series; W. A. Benjamin Incorporated: New York, NY, USA; Amsterdam, The Netherlands, 1968
 	
 	\bibitem{vag} B. M. Vaganan, (2012). Cole‐Hopf Transformations for Higher Dimensional Burgers Equations With Variable Coefficients. Studies in Applied Mathematics, 129(3), 300-308.
 	
 	\bibitem{wid} D. V. Widder, (1976). The heat equation (Vol. 67). Academic Press.
 		
 	\bibitem{zuparic} M. Zuparic, K. Hoek, (2019). Green’s functions and the Cauchy problem of the Burgers hierarchy and forced Burgers equation. Communications in Nonlinear Science and Numerical Simulation, 73, 275-290.
 	
\end{thebibliography}
\end{document}